\documentclass[11pt,reqno]{amsart}

\usepackage{amssymb,amsmath,amsthm,amsfonts}
\usepackage{hyperref}

\title[$r\sharp$, H-flux, and T-Duality]{The $r^\sharp$ invariant as a discriminant for the survival of the H-flux under T-duality on product manifolds}

\author{Alexander Pigazzini, Magdalena Toda}
\date{}

\theoremstyle{plain}
\newtheorem{theorem}{Theorem}[section]
\newtheorem{lemma}[theorem]{Lemma}
\newtheorem{corollary}[theorem]{Corollary}
\newtheorem{proposition}[theorem]{Proposition}

\theoremstyle{definition}
\newtheorem{definition}[theorem]{Definition}
\newtheorem{example}[theorem]{Example}
\newtheorem{remark}[theorem]{Remark}

\numberwithin{equation}{section}

\begin{document}

\begin{abstract}
We show that the cohomological invariant $r^\sharp$, introduced
in \cite{PT1} as a lower bound for the off-diagonal holonomy
dimension of metric connections with totally skew torsion on
product manifolds, predicts the behaviour of the torsion
$3$-form under both dimensional reduction and Buscher
T-duality.  On a product $M = \Sigma_g \times M_2$ equipped
with a product metric, when $r^\sharp = 0$ the parallel-form
strata identify a flat circle factor
$S^1_\beta \subset M_2$, generated by a parallel vector field
with closed orbits, and the entire $H$-flux is converted into
geometric flux under T-duality along $S^1_\beta$ (the parallel
regime); when $r^\sharp = 1$, no such circle factor exists, and the
$H$-flux survives T-duality along every flat circle factor
as $H$-flux in the dual background (the transversely non-reducible regime).  When $M_2 = N \times T^k$ contains
a torus factor, we prove that the Bouwknegt--Hannabuss--Mathai
obstruction to successive T-dualities vanishes automatically
for $H$-flux of pure bidegree $(2,1)$, that the resulting
dualities are non-interfering and order-independent, and
that $r^\sharp$ detects the \emph{irreducible kernel} of
the $H$-flux: the component that survives T-duality along
every flat circle factor and cannot be converted into
geometric or non-geometric flux by any composition of such
dualities.
This provides a metric refinement of
topological T-duality: while the latter disregards the
Riemannian metric entirely, $r^\sharp$ detects whether the
cohomological coupling is aligned with the flat sub-factors
identified by the Levi-Civita parallel-form strata.
\end{abstract}

\maketitle
\noindent\textit{Keywords:} Connections with torsion, holonomy algebra, product manifolds, de Rham cohomology, parallel-form strata, H-flux, geometric flux, Buscher T-duality, dimensional reduction.
\\
\textit{MSC (2020):} 53C29, 53C05, 58A14, 57R19, 81T30.
\section{Introduction}
\label{sec:intro}

Let $(M, g)$ be a compact oriented Riemannian product manifold
and let $\nabla^C = \nabla^{LC} + \tfrac{1}{2}T$ be a metric
connection with totally skew-symmetric torsion $T \in \Omega^3(M)$.
In \cite{PT1}, the authors introduced the invariant
\[
  r^\sharp
  \;:=\;
  \operatorname{rank}_{\mathbb{R}}
    \bigl([\omega]_{\mathrm{mixed}}\bigr)
  \;-\;
  \dim\mathcal{K},
\]
where the torsion 3-form represents a non-trivial
de Rham mixed cohomology class $[\omega]$ via its harmonic projection ($T^\flat = \omega_h +d\alpha+ \delta \beta$, i.e. $[\omega] := [\omega_h]$) and $\mathcal{K}$ is an obstruction space measuring the intersection of the mixed K\"unneth factor spaces with the Levi-Civita parallel-form strata of the factors. The main theorem of \cite{PT1} and its extension (in \emph{Section 7} of \cite{PT1}) establish that, on product manifolds where the factor contributing the 2-form is a compact oriented surface while the other factor may be of arbitrary dimension, $r^\sharp$ provides a lower bound for the dimension of the off-diagonal holonomy algebra $\mathfrak{hol}^{\mathrm{off}}_p(\nabla^C)$ on a non-empty open subset.

As observed in \cite{PT1}, the proof for pure bidegree $(2,1)$
uses $\dim M_1 = 2$ but not $\dim M_2 = 2$; the theorem
therefore extends to products $\Sigma_g \times M_2$ where
$\Sigma_g$ is a compact oriented surface and $M_2$ is a compact
oriented Riemannian manifold of arbitrary dimension.  In the
present note we exploit this extended setting to establish a
connection between $r^\sharp$ and two fundamental operations:
dimensional reduction along a circle factor, and Buscher
T-duality.

\medskip
Throughout, $\Sigma_g$ denotes a compact connected oriented
surface of genus $g \geq 1$ and $(M_2, g_2)$ a compact connected
oriented Riemannian manifold with $b_1(M_2) \geq 1$.  We set
$M := \Sigma_g \times M_2$ with the product metric
$g = g_\Sigma \oplus g_2$ and consider torsion $3$-forms of
pure bidegree $(2,1)$ with respect to the splitting
$T_pM = V_\Sigma \oplus V_{M_2}$, i.e.\
$T \in \Gamma(\Lambda^2 V_\Sigma^* \otimes V_{M_2}^*)$; for such
connections we refer to \cite{Agricola2006} for background.

Following the standard string-theoretic identification, the
torsion $3$-form of $\nabla^C$ is the $H$-flux of the
background, $T = H$, and we use the two symbols interchangeably
according to context.  Since T-duality is formulated for
backgrounds whose flux satisfies the Dirac quantisation
condition, we assume throughout that the de Rham class $[H]$ is
the image of an integral class,
\begin{equation}\label{eq:integrality}
  [H] \in \operatorname{im}\bigl(H^3(M;\mathbb{Z})
  \longrightarrow H^3(M;\mathbb{R})\bigr).
\end{equation}
This hypothesis is not needed for the results of
Section \ref{sec:dimred}, which are purely Riemannian, but it is
required from Section \ref{sec:Tduality} onwards: it guarantees
that the fibrewise integral of $H$ along a circle factor is an
integral class, hence the first Chern class of an actual
principal circle bundle, so that the T-dual background exists as
a classical space.
Since $\dim\Sigma_g = 2$, every such form can be written
pointwise as
\begin{equation}\label{eq:T-form}
  T_p
  \;=\;
  h(p)\,\operatorname{vol}_{V_\Sigma}\wedge\tau_p,
\end{equation}
where $h\colon M \to \mathbb{R}$ is smooth and
$\tau \in \Gamma(V_{M_2}^*)$.

When $r^\sharp = 0$, the harmonic $1$-form
$\beta \in H^1(M_2)$ that enters the mixed class belongs to
$\mathcal{P}_1(M_2)$, defining a parallel---hence Killing---vector field
$\beta^\sharp$ on $M_2$.  The de Rham splitting theorem applied
to the universal cover exhibits $\widetilde{M_2}$ as a metric
product $\mathbb{R} \times \widetilde{N}$, the $\mathbb{R}$-factor
being the integral curve of $\beta^\sharp$.  When the orbits of
$\beta^\sharp$ are closed and the induced isometric circle action
on $M_2$ is trivialisable, this splitting descends to a global
product $M_2 \cong N \times S^1_\beta$, where $S^1_\beta$ is the
flat circle generated by $\beta^\sharp$ and $N$ is the
complementary factor.  This circle is the natural candidate for
dimensional reduction and T-duality, and from
Section \ref{sec:dimred} onwards we place ourselves in the
setting where such a global splitting is available.

In the remainder of the paper we work in the concrete setting
where $M_2$ admits an explicit circle factor, i.e.\
$M_2 = N \times S^1$ for a compact connected oriented Riemannian
manifold $(N, g_N)$ with $b_1(N) \geq 1$, the circle carrying its
flat metric of unit length.  This is the
setting in which both dimensional reduction and Buscher
T-duality can be formulated directly.  The general mechanism
described above---whereby $r^\sharp = 0$ produces a circle
factor via the de Rham splitting theorem---is then realised
concretely: $S^1_\beta$ coincides with $S^1$ when
$\beta \in \mathbb{R}\cdot d\theta$, and with a different
circle factor in the de Rham splitting of $N \times S^1$
when $\beta \in \mathcal{P}_1(N)$.

\section{Dimensional reduction and the r-sharp invariant}
\label{sec:dimred}

We begin by recalling the relevant definitions
from \cite{PT1}.  The harmonic projection of $T^\flat$ with
respect to $g$ decomposes according to the K\"unneth theorem
as $\omega_h^{2,1} = \operatorname{vol}_{\Sigma_g} \wedge \beta$
with $\beta \in \mathcal{H}^1(N \times S^1, g_{N \times S^1})$.
Since $b_2(\Sigma_g) = 1$, the mixed factor space $\mathcal{V}_{2,1}$
is intrinsically defined \cite[Lemma 2.3]{PT1}.
The obstruction space is
$\mathcal{K} = \mathcal{V}_{2,1} \cap
\bigl(\mathcal{P}_2(\Sigma_g) \otimes
\mathcal{P}_1(N \times S^1)\bigr)$,
and the invariant is $r^\sharp = r_{2,1} - \dim\mathcal{K}$,
where $r_{2,1} \in \{0, 1\}$ in the extended admissible setting.

Since the metric on $N \times S^1$ is a product and $N$ is
connected, the harmonic and parallel $1$-forms decompose as
\begin{equation}\label{eq:H1-decomp}
  \mathcal{H}^1(N \times S^1)
  \;=\;
  \mathcal{H}^1(N) \;\oplus\; \mathbb{R}\cdot d\theta,
\end{equation}
\begin{equation}\label{eq:P1-decomp}
  \mathcal{P}_1(N \times S^1)
  \;=\;
  \mathcal{P}_1(N) \;\oplus\; \mathbb{R}\cdot d\theta,
\end{equation}
both sums being orthogonal at every point; the second follows
from the de Rham decomposition of parallel forms on a Riemannian
product together with $\mathcal{P}_0(N) = \mathbb{R}$.
Consequently, $\beta \in \mathcal{P}_1(N \times S^1)$ if and only
if $\beta$ can be written as a sum of a parallel $1$-form on $N$
and a multiple of $d\theta$.  In particular, when
$\mathcal{P}_1(N) = \{0\}$ (e.g.\ when $\operatorname{Ric}(N) < 0$,
since a parallel vector field $X$ satisfies
$\operatorname{Ric}(X, X) = 0$, contradicting strict negativity
unless $X = 0$), the condition $\beta \in \mathcal{P}_1(N \times S^1)$
reduces to $\beta \in \mathbb{R}\cdot d\theta$.

We now examine the behaviour of the torsion under the natural
restriction to the reduced product $\Sigma_g \times N$, obtained
by collapsing the circle factor.

\begin{proposition}[Dimensional reduction along $S^1$]
\label{prop:dimred}
Let $M = \Sigma_g \times N \times S^1$ with product metric and
let $T = \operatorname{vol}_{\Sigma_g} \wedge \beta$ be a harmonic
torsion $3$-form of pure bidegree $(2,1)$ with
$\beta \in \mathcal{H}^1(N \times S^1)$.  Denote by
$\iota\colon \Sigma_g \times N \hookrightarrow M$ the inclusion
at any fixed value of $\theta$.

\begin{enumerate}
\item[\textnormal{(i)}]  If\/ $\beta \in \mathcal{H}^1(N)$
  $($i.e.\ $\beta$ has no component along $d\theta$$)$, then
  $\iota^*T = \operatorname{vol}_{\Sigma_g} \wedge \beta \neq 0$
  as a $3$-form on $\Sigma_g \times N$.

\item[\textnormal{(ii)}]  If\/ $\beta = c\,d\theta$ for some
  $c \in \mathbb{R} \setminus \{0\}$, then $\iota^*T = 0$.
\end{enumerate}
\end{proposition}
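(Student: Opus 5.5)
The plan is to compute the pullback $\iota^*T$ directly. The inclusion $\iota_{\theta_0}\colon \Sigma_g \times N \hookrightarrow \Sigma_g \times N \times S^1$, $(x,y)\mapsto (x,y,\theta_0)$, is a product of identity maps on $\Sigma_g$ and $N$ with the constant map to $\theta_0\in S^1$. Pullback commutes with wedge products, so
\[
\iota^*T=\iota^*\operatorname{vol}_{\Sigma_g}\wedge\iota^*\beta .
\]
The first step is to record that $\iota^*\operatorname{vol}_{\Sigma_g}=\operatorname{vol}_{\Sigma_g}$. This holds because $\operatorname{vol}_{\Sigma_g}$ on $M$ means the pullback under the projection $\mathrm{pr}_\Sigma$, and $\mathrm{pr}_\Sigma\circ\iota$ is the projection $\Sigma_g\times N\to\Sigma_g$. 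The second step is to record that $\iota^*d\theta=d(\theta\circ\iota)=d(\theta_0)=0$. The third step is to note that for $\beta\in\mathcal{H}^1(N)$, meaning the pullback of a harmonic form on $N$ under $\mathrm{pr}_N$, we have $\iota^*\beta=\beta$, since $\mathrm{pr}_N\circ\iota=\mathrm{pr}_N$ on $\Sigma_g\times N$. Here I use the decomposition \eqref{eq:H1-decomp} to interpret the two cases.

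Part (ii) is then immediate: $\iota^*T=c\,\operatorname{vol}_{\Sigma_g}\wedge\iota^*d\theta=0$.

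For part (i) we get $\iota^*T=\operatorname{vol}_{\Sigma_g}\wedge\beta$, and it remains to check that this is nonzero. Here I implicitly assume $\beta\neq0$, since $T$ is taken to represent a nontrivial mixed class. The argument is that $\beta$ is a nonzero harmonic $1$-form on $N$, so there is a point $y\in N$ and a vector $v\in T_yN$ with $\beta_y(v)\neq0$. Pick an oriented orthonormal frame $e_1,e_2$ at any $x\in\Sigma_g$. Evaluating at $(x,y)$ gives
\[
(\operatorname{vol}_{\Sigma_g}\wedge\beta)(e_1,e_2,v)=\operatorname{vol}_{\Sigma_g}(e_1,e_2)\,\beta(v)=\beta(v)\neq0,
\]
because the other terms in the wedge expansion pair $\Sigma$-vectors with $\beta$ or $N$-vectors with $\operatorname{vol}_{\Sigma_g}$, and these vanish by the bidegree splitting. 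One could additionally observe that the class is nonzero in cohomology by Künneth, but the statement only asks for a nonzero form.

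The only mildly delicate point is the nonvanishing in (i), namely making explicit that $\beta\neq0$ and that the wedge of forms pulled back from different factors is nonzero. The evaluation on a mixed frame above handles this, so I expect no real obstacle.
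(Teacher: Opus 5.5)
Your proposal is correct and takes essentially the same route as the paper: compute $\iota^*T$ factorwise using $\iota^*(d\theta)=0$ and $\iota^*\beta=\beta$ for $\beta$ pulled back from $N$. Your evaluation on $(e_1,e_2,v)$, and your remark that $\beta\neq 0$ must be assumed in (i), make explicit what the paper's proof asserts in one line (``since $\beta$ is a non-zero harmonic $1$-form on $N$'').
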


\begin{proof}
The pullback $\iota^*$ acts on $1$-forms on $N \times S^1$ by
restriction to vectors tangent to $N$ (at a fixed value of
$\theta$).  For any vector $Z$ tangent to $N$,
$\iota^*(d\theta)(Z) = d\theta(Z) = 0$, since $Z$ has no
component along $\partial_\theta$.  Thus
$\iota^*(d\theta) = 0$.  Conversely, if
$\beta \in \Omega^1(N)$, then for any $Z$ tangent to $N$,
$\iota^*\beta(Z) = \beta(Z)$, and $\iota^*\beta = \beta$
as a $1$-form on $N$.

In case (i), $T = \operatorname{vol}_{\Sigma_g} \wedge \beta$
with $\beta \in \Omega^1(N)$, so
$\iota^*T = \operatorname{vol}_{\Sigma_g} \wedge \iota^*\beta
= \operatorname{vol}_{\Sigma_g} \wedge \beta$.
Since $\beta$ is a non-zero harmonic $1$-form on $N$ and
$\operatorname{vol}_{\Sigma_g}$ is non-zero, the wedge product
is a non-zero $3$-form on $\Sigma_g \times N$.

In case (ii), $T = c\,\operatorname{vol}_{\Sigma_g} \wedge d\theta$,
so $\iota^*T = c\,\operatorname{vol}_{\Sigma_g} \wedge
\iota^*(d\theta) = 0$.
\end{proof}

When $\mathcal{P}_1(N) = \{0\}$, the invariant $r^\sharp$
determines exactly which of the two cases occurs.

\begin{corollary}[r-sharp and survival of non-reducibility along the product splitting]
\label{cor:survival}
Under the hypotheses of Proposition \ref{prop:dimred}, assume
further that $\mathcal{P}_1(N) = \{0\}$.  Then:

\begin{enumerate}
\item[\textnormal{(i)}]  If\/ $r^\sharp = 1$, the torsion $3$-form survives restriction to $\Sigma_g \times N$, and the product splitting $V_\Sigma\oplus V_N$ is not invariant under the holonomy of the restricted connection on the reduced product $($by \cite[Corollary 5.3]{PT1} applied to $\Sigma_g \times N$$)$.

\item[\textnormal{(ii)}]  If\/ $r^\sharp = 0$ with $r_{2,1} = 1$, then $\beta = c\,d\theta$, the torsion vanishes on  $\Sigma_g \times N$, and the restricted connection reduces to  the Levi-Civita connection of the product, whose holonomy preserves the splitting $V_\Sigma\oplus V_N$.
\end{enumerate}
\end{corollary}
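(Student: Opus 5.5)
The plan is to first turn the hypothesis $\mathcal{P}_1(N)=\{0\}$ into an explicit description of the obstruction space, so that the value of $r^\sharp$ can be read off from the position of $\beta$ relative to $d\theta$. By \eqref{eq:P1-decomp}, $\mathcal{P}_1(N\times S^1)=\mathbb{R}\cdot d\theta$. On a compact oriented surface the parallel $2$-forms are $\mathcal{P}_2(\Sigma_g)=\mathbb{R}\cdot\operatorname{vol}_{\Sigma_g}$. Hence
\[
  \mathcal{P}_2(\Sigma_g)\otimes\mathcal{P}_1(N\times S^1)=\mathbb{R}\cdot\operatorname{vol}_{\Sigma_g}\wedge d\theta .
\]
Since $b_2(\Sigma_g)=1$, the mixed factor space $\mathcal{V}_{2,1}$ is the line $\mathbb{R}\cdot\operatorname{vol}_{\Sigma_g}\wedge\beta$ when $r_{2,1}=1$, and it is $\{0\}$ when $r_{2,1}=0$. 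This gives $\dim\mathcal{K}=1$ if and only if $r_{2,1}=1$ and $\beta\in\mathbb{R}\cdot d\theta$; otherwise $\dim\mathcal{K}=0$. Using \eqref{eq:H1-decomp}, write $\beta=\beta_N+c\,d\theta$ with $\beta_N\in\mathcal{H}^1(N)$.

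For (ii), the hypotheses $r^\sharp=0$ and $r_{2,1}=1$ force $\dim\mathcal{K}=1$, so $\beta=c\,d\theta$ with $c\neq0$. Proposition \ref{prop:dimred}(ii) then gives $\iota^*T=0$. The restricted connection is therefore $\nabla^{LC}$ of the product $\Sigma_g\times N$, since $\iota$ is a totally geodesic isometric slice of a Riemannian product and the restricted Levi-Civita connection is the Levi-Civita connection of the slice. The Levi-Civita connection of a Riemannian product has holonomy contained in $\mathfrak{so}(V_\Sigma)\oplus\mathfrak{so}(V_N)$, so it preserves the splitting $V_\Sigma\oplus V_N$.

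For (i), the hypothesis $r^\sharp=1$ forces $r_{2,1}=1$ and $\dim\mathcal{K}=0$, so $\beta\notin\mathbb{R}\cdot d\theta$, i.e.\ $\beta_N\neq0$.

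Here lies the main obstacle. Proposition \ref{prop:dimred}(i) is stated only for $\beta\in\mathcal{H}^1(N)$, whereas $\beta$ may also carry a $d\theta$ component. I would redo the argument of that proof directly: since $\iota^*d\theta=0$, we get
\[
  \iota^*T=\operatorname{vol}_{\Sigma_g}\wedge\beta_N .
\]
This form is non-zero because $\beta_N$ is a non-zero harmonic form on the connected manifold $N$, which settles survival.

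For the holonomy claim I would apply \cite[Corollary 5.3]{PT1} to $\Sigma_g\times N$ with torsion $\iota^*T$. This requires computing $r^\sharp$ of the reduced product. The harmonic class there is $\operatorname{vol}_{\Sigma_g}\wedge\beta_N$, which is non-trivial, so $r_{2,1}=1$. The obstruction space sits inside
\[
  \mathcal{P}_2(\Sigma_g)\otimes\mathcal{P}_1(N)=\{0\},
\]
so $\mathcal{K}=\{0\}$ and the reduced $r^\sharp$ equals $1$. A further subtlety is that $\iota^*T$ is harmonic on the slice: $\beta_N$ is harmonic on $N$ and the product is Riemannian, so it is its own harmonic representative. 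With this, \cite[Corollary 5.3]{PT1} applies and yields $\dim\mathfrak{hol}^{\mathrm{off}}\geq1$ on an open set, hence non-invariance of the splitting.
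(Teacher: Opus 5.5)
Your proposal is correct and follows essentially the same route as the paper: you use $\mathcal{P}_1(N\times S^1)=\mathbb{R}\cdot d\theta$ to get $\beta=c\,d\theta$ in case (ii) and $\gamma\neq 0$ in case (i), pull back using $\iota^*d\theta=0$, and invoke \cite[Corollary 5.3]{PT1} on $\Sigma_g\times N$ with $r_{2,1}=1$ and $\dim\mathcal{K}=0$. Your explicit computation of $\mathcal{K}$, your treatment of the $d\theta$-component when applying Proposition~\ref{prop:dimred}(i), and your check that $\iota^*T$ is harmonic on the slice make explicit what the paper leaves implicit.
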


\begin{proof}
Since $\mathcal{P}_1(N) = \{0\}$, the decomposition
\eqref{eq:P1-decomp} gives
$\mathcal{P}_1(N \times S^1) = \mathbb{R}\cdot d\theta$.
The condition $r^\sharp = 0$ with $r_{2,1} = 1$ means
$\beta \in \mathcal{P}_1(N \times S^1) = \mathbb{R}\cdot d\theta$,
so $\beta = c\,d\theta$ for some $c \neq 0$.
Proposition \ref{prop:dimred}(ii) gives $\iota^*T = 0$.
The restricted connection on $\Sigma_g \times N$ is therefore
$\nabla^{LC}_{\Sigma_g \times N}$, whose holonomy preserves
the splitting $V_\Sigma \oplus V_N$.

When $r^\sharp = 1$, we have
$\beta \notin \mathcal{P}_1(N \times S^1)$.  Since
$\mathcal{P}_1(N \times S^1) = \mathbb{R}\cdot d\theta$,
writing $\beta = \gamma + c\,d\theta$ with
$\gamma \in \mathcal{H}^1(N)$ and $c \in \mathbb{R}$,
the condition $\beta \notin \mathbb{R}\cdot d\theta$
forces $\gamma \neq 0$.
Proposition \ref{prop:dimred}(i) applied to the
component $\gamma$ gives
$\iota^*T = \operatorname{vol}_{\Sigma_g} \wedge \gamma \neq 0$
on $\Sigma_g \times N$.  This is a non-trivial torsion
$3$-form of pure bidegree $(2,1)$ on the product
$\Sigma_g \times N$, where $\Sigma_g$ is a surface.
Since $\gamma \in \mathcal{H}^1(N) \setminus \{0\}$ and
$\mathcal{P}_1(N) = \{0\}$, the restricted torsion class
$[\iota^*T]$ is a non-trivial mixed class on
$\Sigma_g \times N$ with $r_{2,1} = 1$ and $\dim\mathcal{K} = 0$.
By \cite[Corollary 5.3]{PT1}, the product splitting $V_\Sigma\oplus V_N$ is not invariant under the holonomy of the restricted connection on $\Sigma_g \times N$.
\end{proof}

\begin{remark}\label{rem:general-case}
When $\mathcal{P}_1(N) \neq \{0\}$, the invariant $r^\sharp$
still determines the existence of a \emph{specific} circle
factor along which the torsion vanishes: the parallel
$1$-form $\beta \in \mathcal{P}_1(N \times S^1)$ defines a
parallel vector field $\beta^\sharp$, which is automatically
Killing ($\nabla\beta^\sharp = 0$ implies
$\mathcal{L}_{\beta^\sharp}g = 0$). When the orbits of
$\beta^\sharp$ are closed---which is the case whenever
$\beta$ is tangent to an explicit circle factor in the
de Rham splitting of $N \times S^1$---they generate a flat
circle $S^1_\beta$.  Dimensional reduction along $S^1_\beta$
annihilates the torsion, while reduction along any other
circle factor generically does not.

When $r^\sharp = 1$, no parallel $1$-form is associated to
$\beta$, hence no circle factor exists along which the torsion
can be annihilated. The non-invariance of the splitting $V_\Sigma\oplus V_N$ under the holonomy is resistant to  dimensional reduction along any flat circle factor  of $N \times S^1$.
\end{remark}

\section{T-duality and the r-sharp invariant}
\label{sec:Tduality}

We now connect the dimensional reduction picture to the
Buscher rules for T-duality.  We work on
$M = \Sigma_g \times N \times S^1$ with product metric
$g = g_\Sigma \oplus g_N \oplus d\theta^2$.
The vector field $\partial_\theta$ is a Killing field, and
the Buscher rules \cite{Buscher1987,Buscher1988} for T-duality
along $S^1$ apply.

In the notation of Buscher, the background fields are the metric
$G$, the $B$-field $B$ (with $H = dB$ the $H$-flux), and the
dilaton $\phi$.  Two points of principle should be recorded
before the computation.

First, when $[H] \neq 0$ the $B$-field is not a globally defined
$2$-form: it is the local connection datum of a gerbe with
curvature $H$, and the identity $H = dB$ holds on the members of
a good open cover.  All Buscher computations below are therefore
carried out locally, on a $\partial_\theta$-invariant open set;
this is legitimate because the conclusions concern the globally
defined objects $\widetilde{H}$ and the dual metric.  The global
counterpart of the local statement is recorded in
Remark \ref{rem:global-dual}.

Second, since $\partial_\theta$ is a Killing field and $H$ is
$\partial_\theta$-invariant, the local potential $B$ may be
chosen $\partial_\theta$-invariant as well; we fix such a gauge
once and for all.  The Buscher rules require only the following
normalisation of the background along the circle direction, which
is satisfied by the product metric and, as we show in
Lemma \ref{lem:non-interference}, is propagated by the dualities
themselves.

\begin{definition}[Adapted background]\label{def:adapted}
Let $S^1_{\theta_1}, \ldots, S^1_{\theta_k}$ $(k \geq 1)$ be
circle factors of $M$ with angular coordinates
$\theta_1, \ldots, \theta_k$, and let
$J \subseteq \{1, \ldots, k\}$.  A background $(G, B)$ is
\emph{adapted along $J$} if, in local coordinates in which the
$\theta_i$ are angular coordinates on the circle factors:
\begin{itemize}
\item[\textup{(A1)}] $G_{\theta_j\theta_j} = 1$ and
  $G_{Z\theta_j} = 0$ for every $j \in J$ and every coordinate
  direction $Z \neq \theta_j$;
\item[\textup{(A2)}] $B_{\theta_i\theta_j} = 0$ for all
  $i \neq j$;
\item[\textup{(A3)}] all components of $G$ and $B$ are
  independent of $\theta_1, \ldots, \theta_k$.
\end{itemize}
\end{definition}

The product metric $g = g_\Sigma \oplus g_N \oplus d\theta^2$
satisfies \textup{(A1)} and \textup{(A3)}, and \textup{(A2)} is
vacuous for $k = 1$; hence it is adapted along $\{1\}$.

\begin{proposition}[Buscher rules along an adapted circle]
\label{prop:buscher}
Let $M = \Sigma_g \times N \times S^1$, let $(G, B)$ be a
background adapted along $\{1\}$ in the sense of
Definition \ref{def:adapted}, with $\theta_1 = \theta$, and let
the $H$-flux be
$H = \operatorname{vol}_{\Sigma_g} \wedge \beta$ with
$\beta = \gamma + c\,d\theta$, where $c \in \mathbb{R}$ and
$\gamma$ is a closed $\partial_\theta$-invariant $1$-form with no
component along $d\theta$.  $($This holds in particular for the
product metric and $\beta \in \mathcal{H}^1(N \times S^1)$, in
which case $\gamma \in \mathcal{H}^1(N)$ by
\eqref{eq:H1-decomp}.$)$
Under Buscher T-duality along $S^1$:

\begin{enumerate}
\item[\textnormal{(i)}]  The component
  $H^{\parallel} := c\,\operatorname{vol}_{\Sigma_g} \wedge d\theta$
  $($with one index along $\theta$$)$ is mapped to off-diagonal
  components of the dual metric: $\tilde{g}_{\theta\mu}$.
  It becomes geometric flux in the T-dual background and ceases
  to exist as $H$-flux.

\item[\textnormal{(ii)}]  The component
  $H^{\perp} := \operatorname{vol}_{\Sigma_g} \wedge \gamma$
  $($with no index along $\theta$$)$ is invariant:
  $\widetilde{H}^{\perp} = H^{\perp}$.  It remains $H$-flux
  in the T-dual background.
\end{enumerate}
\end{proposition}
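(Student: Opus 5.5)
The proof will be a direct local computation with the Buscher rules. We work on a $\partial_\theta$-invariant open set $U$ of a good cover and split coordinates as $(x^\mu,\theta)$, where $x^\mu$ runs over local coordinates on $\Sigma_g\times N$. Recall the Buscher rules for a background adapted along $\{1\}$. Since (A1) gives $G_{\theta\theta}=1$ and $G_{\mu\theta}=0$, they read
\[
\tilde G_{\theta\theta}=1,\quad \tilde G_{\mu\theta}=B_{\mu\theta},\quad \tilde G_{\mu\nu}=G_{\mu\nu}-B_{\mu\theta}B_{\nu\theta},
\]
\[
\tilde B_{\mu\theta}=G_{\mu\theta}=0,\quad \tilde B_{\mu\nu}=B_{\mu\nu}.
\]
Here (A3) guarantees that all components are $\theta$-independent, so these rules apply. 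The strategy is to choose a $\partial_\theta$-invariant local potential $B$ adapted to the decomposition $H=H^\parallel+H^\perp$, and then read off the two statements.

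\textbf{Step 1: choice of gauge.} On a contractible $U$ (taken of the form $U'\times I_\theta$ or its invariant version) we write $\operatorname{vol}_{\Sigma_g}=d\lambda$ for a local $1$-form $\lambda$ on $\Sigma_g$, and $\gamma=df$ for a local function $f$ on $N$. We then set
\[
B^\parallel:=c\,\lambda\wedge d\theta,\qquad B^\perp:=f\,\operatorname{vol}_{\Sigma_g},\qquad B=B^\parallel+B^\perp.
\]
This gives $dB^\parallel=H^\parallel$ and $dB^\perp=df\wedge\operatorname{vol}_{\Sigma_g}=\operatorname{vol}_{\Sigma_g}\wedge\gamma=H^\perp$, since $\operatorname{vol}$ has even degree. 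Both pieces are $\partial_\theta$-invariant, and $B^\perp$ has no $\theta$ leg. Any other invariant gauge differs by a closed invariant $2$-form, that is, by a local gauge transformation. Under Buscher this corresponds to a coordinate shift of $\tilde\theta$ together with a $\tilde B$ gauge change, so the conclusions are gauge independent. This point should be remarked but not belaboured.

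\textbf{Step 2: part (i).} The only components of $B$ with a $\theta$ index are $B_{\mu\theta}=c\,\lambda_\mu$. By the rules above, they become the off-diagonal metric components $\tilde g_{\mu\theta}=c\lambda_\mu$. The dual metric therefore takes the Kaluza--Klein form
\[
\tilde g=\hat g+(d\tilde\theta+c\lambda)^2,
\]
with $\hat g=G_{\mu\nu}$, since $G_{\mu\nu}-B_{\mu\theta}B_{\nu\theta}+B_{\mu\theta}B_{\nu\theta}=G_{\mu\nu}$. The curvature of the connection $c\lambda$ is $c\operatorname{vol}_{\Sigma_g}=\iota_{\partial_\theta}H^\parallel$ up to sign; this is the geometric flux. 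Meanwhile $\tilde B_{\mu\theta}=G_{\mu\theta}=0$, so $B^\parallel$ contributes nothing to $\tilde B$, and $H^\parallel$ disappears from $\tilde H$.

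\textbf{Step 3: part (ii).} From $\tilde B_{\mu\nu}=B_{\mu\nu}=f\operatorname{vol}_{\mu\nu}$ and $\tilde B_{\mu\tilde\theta}=0$ we get $\tilde B=B^\perp$. Hence $\tilde H=d\tilde B=H^\perp$, and this object is globally defined because $H^\perp$ is.

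\textbf{Main obstacle.} The delicate point is that $\tilde G_{\mu\nu}$ and $\tilde B_{\mu\nu}$ should be expressed in the dual frame $d\tilde\theta+c\lambda$. Before concluding that $\tilde H^\perp=H^\perp$ exactly, with no cross term of the form $\operatorname{vol}\wedge$ mixing, we must check that the horizontal projection does not alter $\tilde B$. Here this holds because $\tilde B$ has no $\tilde\theta$ leg, and $H^\perp$ has none either. For part (i), we also want to confirm globally that the local connections $c\lambda$ patch into a circle bundle with Chern class $c[\operatorname{vol}_{\Sigma_g}]$. This is where the integrality hypothesis \eqref{eq:integrality} enters, and the details are deferred to Remark \ref{rem:global-dual}.
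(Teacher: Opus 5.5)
Your proposal is correct and follows essentially the paper's route. Both are a local Buscher computation on a $\partial_\theta$-invariant chart, where (A1) reduces the rules to $\tilde G_{\mu\theta}=B_{\mu\theta}$, $\tilde B_{\mu\theta}=0$ and $\tilde B_{\mu\nu}=B_{\mu\nu}$. The $\theta$-leg part of $H$ is therefore traded for off-diagonal metric components, and $\tilde H=H^\perp$.

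The only difference is how you handle the gauge. You fix the explicit potential $B=c\,\lambda\wedge d\theta+f\,\operatorname{vol}_{\Sigma_g}$ and then argue gauge independence. The paper instead computes $\tilde H_{\mu\nu\theta}=0$ and $\tilde H_{12a}=H_{12a}$ for an arbitrary invariant $B$. That makes your gauge discussion unnecessary: $\tilde B$ is just the $\theta$-free part of $B$, so $d\tilde B$ is the $\theta$-free part of $dB$ in any invariant gauge.

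One slip to fix: for $G_{\theta\theta}=1$ and $G_{\mu\theta}=0$ the Buscher rule is $\tilde G_{\mu\nu}=G_{\mu\nu}+B_{\mu\theta}B_{\nu\theta}$, with a plus sign, not a minus. Your Kaluza--Klein identification with base metric $G_{\mu\nu}$ silently uses the plus sign. With your minus sign the base would be $G_{\mu\nu}-2B_{\mu\theta}B_{\nu\theta}$, which need not be positive definite. This does not affect (i) or (ii).
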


\begin{proof}
We work on a $\partial_\theta$-invariant coordinate
neighbourhood with coordinates $(x^1, x^2)$ on $\Sigma_g$,
$(y^a)$ on $N$, and $\theta$ on $S^1$, on which a local
potential $B$ with $H = dB$ is defined.  By \textup{(A1)},
$G_{\theta\theta} = 1$ and $G_{\mu\theta} = 0$ for all
$\mu \neq \theta$.  We decompose $B$ into components
$B_{\mu\theta}$ and $B_{\mu\nu}$, where $\mu, \nu$ range over
$\Sigma_g$ and $N$ directions only.  Since
$\partial_\theta B_{\mu\nu} = 0$ by \textup{(A3)}, the
components of $H$ satisfy
$H_{12\theta} = \partial_1 B_{2\theta} - \partial_2 B_{1\theta}$
and $H_{12a} = \partial_1 B_{2a} - \partial_2 B_{1a}
+ \partial_a B_{12}$.

Writing $\beta = \gamma + c\,d\theta$ with
$\gamma \in \mathcal{H}^1(N)$ and $c \in \mathbb{R}$, the
component along $d\theta$ gives
$H_{12\theta} = c\sqrt{g_\Sigma}$, and the component along
$\gamma$ gives $H_{12a} = \sqrt{g_\Sigma}\,\gamma_a$.
When $c \neq 0$ we have $H_{12\theta} \neq 0$, so that on the
chosen neighbourhood $B_{\mu\theta} \neq 0$ for at least some
$\mu$; concretely, writing $\operatorname{vol}_{\Sigma_g} = dA$
locally, one may take $B \supseteq c\, A \wedge d\theta$.

The Buscher rules \cite{Buscher1987,Buscher1988} for T-duality
along $S^1$, applied to a background with $G_{\theta\theta} = 1$
and $G_{\mu\theta} = 0$, give the dual fields
\begin{equation}\label{eq:buscher-metric}
  \tilde{G}_{\theta\theta} = 1, \qquad
  \tilde{G}_{\mu\theta} = B_{\mu\theta}, \qquad
  \tilde{G}_{\mu\nu} = G_{\mu\nu}
    + B_{\mu\theta}B_{\nu\theta},
\end{equation}
\begin{equation}\label{eq:buscher-B}
  \tilde{B}_{\mu\theta} = 0, \qquad
  \tilde{B}_{\mu\nu} = B_{\mu\nu}.
\end{equation}
For the dual $H$-flux $\tilde{H} = d\tilde{B}$, the components
with a $\theta$-index satisfy
$\tilde{H}_{\mu\nu\theta}
= \partial_\mu\tilde{B}_{\nu\theta}
- \partial_\nu\tilde{B}_{\mu\theta}
+ \partial_\theta\tilde{B}_{\mu\nu}
= 0$,
since $\tilde{B}_{\mu\theta} = 0$ by \eqref{eq:buscher-B}
and $\partial_\theta\tilde{B}_{\mu\nu}
= \partial_\theta B_{\mu\nu} = 0$ on the product.
Thus all $H$-flux components with a $\theta$-index vanish in
the dual background; the information formerly carried by
$H_{12\theta}$ is now encoded in the off-diagonal metric
components $\tilde{G}_{\mu\theta} = B_{\mu\theta}$
by \eqref{eq:buscher-metric}.  This is the geometric flux.

For the components without $\theta$-index:
$\tilde{H}_{12a}
= \partial_1\tilde{B}_{2a} - \partial_2\tilde{B}_{1a}
+ \partial_a\tilde{B}_{12}
= \partial_1 B_{2a} - \partial_2 B_{1a} + \partial_a B_{12}
= H_{12a}$.
These components are unchanged: the $H$-flux along $\gamma$
survives as $H$-flux in the dual background.
\end{proof}

\begin{remark}[Global description of the dual background]
\label{rem:global-dual}
The local computation above has a standard global counterpart.
Let $\pi \colon M \to \Sigma_g \times N$ be the projection
collapsing the circle factor.  The fibrewise integral
$\pi_*H = c\,\operatorname{vol}_{\Sigma_g}$ is, by the
integrality hypothesis \eqref{eq:integrality}, the image of an
integral class, hence the first Chern class of a principal circle
bundle $\hat{\pi} \colon \widehat{M} \to \Sigma_g \times N$.  The
T-dual background is $\widehat{M}$ equipped with a connection
$1$-form $\eta$ of curvature $d\eta = \pi_*H$ and with the
Kaluza--Klein metric
\[
  \widetilde{g}
  \;=\;
  \hat{\pi}^*\bigl(g_\Sigma \oplus g_N\bigr) \;+\; \eta \otimes \eta ,
\]
whose local expression is exactly
\eqref{eq:buscher-metric} with $\eta = d\theta + c\,A$.  Thus
$\widehat{M}$ is a product manifold if and only if $c = 0$; when
$c \neq 0$ the component $H^{\parallel}$ is traded for the
non-triviality of the bundle, which is the precise content of the
phrase ``$H$-flux becomes geometric flux''.  In that case the
splitting $V_\Sigma \oplus V_N$ referred to in
Theorem \ref{thm:main} is understood on the base
$\Sigma_g \times N$, which remains a Riemannian product.
\end{remark}

The connection with $r^\sharp$ is now immediate.

\begin{theorem}[r-sharp predicts T-duality behaviour]
\label{thm:main}
Let $M = \Sigma_g \times N \times S^1$ with product metric and
harmonic torsion $T = H = \operatorname{vol}_{\Sigma_g} \wedge \beta$
of pure bidegree $(2,1)$, with
$\beta \in \mathcal{H}^1(N \times S^1)$.  Assume
$\mathcal{P}_1(N) = \{0\}$.

\begin{enumerate}
\item[\textnormal{(i)}]  If\/ $r^\sharp = 1$, $H$-flux survives T-duality along $S^1$ as $H$-flux in the dual background. The non-invariance of the product splitting $V_\Sigma\oplus V_N$ under the holonomy is preserved under both dimensional reduction and T-duality.

\item[\textnormal{(ii)}]  If\/ $r^\sharp = 0$ with
  $r_{2,1} = 1$, the entire $H$-flux is converted into  geometric flux under T-duality along $S^1$.  The torsion  vanishes on the dimensionally reduced space $\Sigma_g \times N$, and the holonomy of the reduced connection preserves the product splitting $V_\Sigma\oplus V_N$.
\end{enumerate}

In the general case $($without the hypothesis
$\mathcal{P}_1(N) = \{0\}$$)$, $r^\sharp = 1$ guarantees that
the $H$-flux is resistant to T-duality along any flat circle
factor of $N \times S^1$; when $r^\sharp = 0$, the parallel
$1$-form $\beta$ identifies a Killing vector field
$\beta^\sharp$ on $N \times S^1$. If the orbits of
$\beta^\sharp$ are closed $($which is automatic when $\beta$
is tangent to an explicit circle factor in the de Rham
splitting, as in all examples of Section \ref{sec:examples}$)$,
T-duality along the corresponding circle $S^1_\beta$ converts
the entire $H$-flux into geometric flux.
\end{theorem}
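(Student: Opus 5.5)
The proof reduces the theorem to Corollary \ref{cor:survival} and Proposition \ref{prop:buscher}. I would write $\beta = \gamma + c\,d\theta$ with $\gamma \in \mathcal{H}^1(N)$ and $c \in \mathbb{R}$ using \eqref{eq:H1-decomp}. Under $\mathcal{P}_1(N) = \{0\}$, the decomposition \eqref{eq:P1-decomp} gives $\mathcal{P}_1(N\times S^1) = \mathbb{R}\cdot d\theta$, so the value of $r^\sharp$ is decided by $\gamma$. If $r_{2,1}=1$ and $\dim\mathcal{K}=0$ (so $r^\sharp=1$), then $\gamma \neq 0$. If $r^\sharp = 0$ with $r_{2,1}=1$, then $\gamma = 0$ and $c \neq 0$. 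This dichotomy is already established in the proof of Corollary \ref{cor:survival}, and I would cite it rather than repeat it.

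For (i), Proposition \ref{prop:buscher}(ii) gives $\widetilde H^\perp = H^\perp = \operatorname{vol}_{\Sigma_g}\wedge\gamma \neq 0$, so $H$-flux survives in the dual background. For the statement about holonomy, I would argue in two parts. Under dimensional reduction, Corollary \ref{cor:survival}(i) applies directly. Under T-duality, I would use Remark \ref{rem:global-dual}. The dual space fibres over the base $\Sigma_g\times N$, which keeps its product metric. The surviving flux $\widetilde H^\perp$ has no $\theta$-leg, so it is the pullback of $\operatorname{vol}_{\Sigma_g}\wedge\gamma$ from the base. Restricting or reducing it to the base gives exactly the torsion treated in Corollary \ref{cor:survival}(i). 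Applying \cite[Corollary 5.3]{PT1} there yields non-invariance of $V_\Sigma\oplus V_N$. This step is the main delicacy. The phrase ``preserved under T-duality'' has to be read on the base $\Sigma_g\times N$, as the remark stipulates, because the total space of a nontrivial bundle is no longer a product. I would state this interpretation explicitly and not attempt a holonomy computation on $\widehat M$.

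For (ii), since $\gamma = 0$ we have $H = H^\parallel$. Proposition \ref{prop:buscher}(i) shows that this component goes entirely into $\tilde G_{\mu\theta} = B_{\mu\theta}$ and that $\widetilde H = 0$, so the entire flux becomes geometric flux. The statements about the reduced torsion and holonomy are exactly Corollary \ref{cor:survival}(ii).

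For the general case, I would drop $\mathcal{P}_1(N)=\{0\}$ and take any flat circle factor $S^1_\alpha$ in the de Rham splitting of $N\times S^1$, with unit parallel form $\alpha$. The background is adapted along this circle, so Proposition \ref{prop:buscher} applies after renaming coordinates, and I would decompose $\beta = \beta' + c\,\alpha$. If $r^\sharp = 1$, then $\beta\notin\mathcal{P}_1$, and since $\alpha \in \mathcal{P}_1$ this forces $\beta' \neq 0$, so $H^\perp\ne0$ survives. If $r^\sharp=0$, then $\beta$ is parallel, hence $\beta^\sharp$ is Killing, as in Remark \ref{rem:general-case}. When its orbits close, $\beta^\sharp$ generates $S^1_\beta$, and I would take $\alpha = \beta/|\beta|$. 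Then $\beta'=0$, and Proposition \ref{prop:buscher}(i) converts all of $H$ into geometric flux.
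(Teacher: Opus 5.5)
Your proposal is correct and follows essentially the same route as the paper: it uses the dichotomy $\gamma\neq 0$ versus $\beta = c\,d\theta$ from the proof of Corollary \ref{cor:survival}, Proposition \ref{prop:buscher} for the fate of the flux, Corollary \ref{cor:survival} for the reduced holonomy, and the same Buscher computation along a flat circle factor for the general case. Two of your steps are slightly more careful than the paper's proof: you read the T-duality clause of (i) explicitly on the base $\Sigma_g\times N$, as Remark \ref{rem:global-dual} sanctions, and you decompose $\beta = \beta' + c\,\alpha$ along an arbitrary flat factor, which shows $\beta'\neq 0$ whenever $\beta\notin\mathcal{P}_1(N\times S^1)$; the paper's phrase about the surviving ``component along $\mathcal{H}^1(N)$'' is imprecise when the flat factor lies inside $N$, as $S^1_\varphi$ does in Example \ref{ex:mixed}.
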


\begin{proof}
When $\mathcal{P}_1(N) = \{0\}$, the condition $r^\sharp = 0$
with $r_{2,1} = 1$ forces $\beta = c\,d\theta$ (as in the proof
of Corollary \ref{cor:survival}).
Proposition \ref{prop:buscher}(i) then shows that the entire
$H$-flux becomes geometric flux.
Corollary \ref{cor:survival}(ii) shows that the torsion
vanishes on $\Sigma_g \times N$.

When $r^\sharp = 1$, $\beta$ has a non-zero component
$\gamma \in \mathcal{H}^1(N) \setminus \{0\}$.
Proposition \ref{prop:buscher}(ii) shows that this component
survives as $H$-flux.
Corollary \ref{cor:survival}(i) shows that the product splitting $V_\Sigma\oplus V_N$ is not invariant under the holonomy of the restricted connection on $\Sigma_g \times N$.

For the general case, the argument of
Remark \ref{rem:general-case} applies: $r^\sharp = 0$ means
$\beta \in \mathcal{P}_1(N \times S^1)$, so the vector field
$\beta^\sharp$ is parallel and hence Killing
($\nabla\beta^\sharp = 0$ implies
$\mathcal{L}_{\beta^\sharp}g = 0$).
When the orbits of $\beta^\sharp$ are closed, they generate a
circle factor $S^1_\beta$ in the de Rham splitting of
$N \times S^1$, and T-duality along $S^1_\beta$ converts the
$H$-flux entirely into geometric flux by the same Buscher
calculation (applied with $S^1_\beta$ in place of $S^1$).
When $r^\sharp = 1$, $\beta \notin \mathcal{P}_1(N \times S^1)$,
so $\beta$ is not aligned with any flat circle factor, and the
$H$-flux component along $\mathcal{H}^1(N)$ survives T-duality
along any such factor.
\end{proof}

\begin{remark}\label{rem:rel_T-duality}
(Relationship with topological T-duality).
In the framework of Bouwknegt--Evslin--Mathai
\cite{BEM2004}, T-duality along a circle bundle
$\pi\colon E \to X$ exchanges the first Chern class
$c_1(E)$ with the fiberwise integral $\int_{S^1}\hat{H}$
of the dual H-flux. In the product setting
$M = \Sigma_g \times N \times S^1$, the relevant fiberwise
integral is
\[
  \int_{S^1} H
  \;=\;
  \operatorname{vol}_{\Sigma_g}
  \otimes \int_{S^1} \beta.
\]
Writing $\beta = \gamma + c\,d\theta$ with
$\gamma \in \mathcal{H}^1(N)$ and $c \in \mathbb{R}$,
one has $\int_{S^1}\beta = c\cdot\operatorname{length}(S^1)$.
Three cases arise (assuming $\mathcal{P}_1(N) = \{0\}$):
\begin{enumerate}
\item[\textup{(a)}] If\/ $\gamma = 0$ and $c \neq 0$
  $($i.e.\ $r^\sharp = 0$$)$: the BEM formula gives
  $c_1(\hat{E}) \neq 0$ and the entire H-flux is absorbed
  into the topology of the dual bundle.
\item[\textup{(b)}] If\/ $\gamma \neq 0$ and $c = 0$
  $($i.e.\ $r^\sharp = 1$$)$: the BEM formula gives
  $c_1(\hat{E}) = 0$, no topological change occurs, and
  the full H-flux persists in the dual background.
\item[\textup{(c)}] If\/ $\gamma \neq 0$ and $c \neq 0$
  $($i.e.\ $r^\sharp = 1$$)$: the BEM formula gives
  $c_1(\hat{E}) \neq 0$ $($topological change$)$, but the
  H-flux component $\operatorname{vol}_{\Sigma_g}\wedge\gamma$
  survives as H-flux in the dual background
  $($Proposition \ref{prop:buscher}\textup{(ii)}$)$.
  In this case, T-duality simultaneously produces a
  topological change \emph{and} leaves residual H-flux.
\end{enumerate}
Thus $r^\sharp$ does not determine the fiberwise integral
$\int_{S^1}H$ directly; rather, it detects whether the
H-flux has an \emph{irreducible component}
$\gamma \in \mathcal{H}^1(N)\setminus\{0\}$ that survives
T-duality regardless of whether the fiberwise integral
vanishes or not. The additional content of
Theorem \ref{thm:main} beyond \cite{BEM2004} is twofold:
the identification of the surviving component via the parallel-form strata, and the preservation of the non-invariance of the product splitting under the holonomy on the reduced product, neither of which is visible in the purely topological BEM framework.

We note that the BEM framework operates with integral
cohomology classes $[H] \in H^3(M;\mathbb{Z})$, whereas
$r^\sharp$ is defined via de Rham cohomology.
The criterion $\gamma \neq 0$ versus $\gamma = 0$ is
independent of the coefficient ring.
\end{remark}

\subsection{Multiple T-dualities and the irreducible kernel}
 
We now extend the analysis to successive T-dualities along multiple circle
factors. Let $M = \Sigma_g \times N \times T^k$ with product metric
$g = g_\Sigma \oplus g_N \oplus g_{T^k}$, where $(N, g_N)$ is a compact
oriented Riemannian manifold with $b_1(N) \geq 1$, and
$T^k = S^1_{\theta_1} \times \cdots \times S^1_{\theta_k}$ is a flat $k$-torus
with coordinates $(\theta_1, \ldots, \theta_k)$. The H-flux is
$H = \operatorname{vol}_{\Sigma_g} \wedge \beta$ of pure bidegree $(2,1)$
with $\beta \in \mathcal{H}^1(N \times T^k)$. We decompose
\begin{equation}\label{eq:beta-decomp}
  \beta = \gamma + \sum_{i=1}^{k} c_i \, d\theta_i, \qquad
  \gamma \in \mathcal{H}^1(N),\quad c_i \in \mathbb{R}.
\end{equation}
 
The key observation is that, for H-flux of pure bidegree $(2,1)$,
the obstruction to successive T-dualities along a torus fibre
identified by Bouwknegt--Hannabuss--Mathai \cite{BHM2004} vanishes
automatically, and the dualities are non-interfering.  We recall
that for a principal $T^k$-bundle the T-dual is again a classical
space---rather than a bundle of noncommutative tori---precisely
when the component of $H$ with two legs along the fibre vanishes,
equivalently when the fibrewise integration of $H$ over every
$2$-dimensional subtorus of the fibre vanishes \cite{BHM2004}.
 
\begin{lemma}[Vanishing of the Bouwknegt--Hannabuss--Mathai
obstruction]\label{lem:BEM-vanish}
  Under the hypotheses above,
  \[
    \iota_{\partial_{\theta_i}}\,\iota_{\partial_{\theta_j}} H \;=\; 0
    \qquad \text{for all } i \neq j;
  \]
  equivalently, the fibrewise integration
  $(\pi_{ij})_* H \in \Omega^1\bigl(\Sigma_g \times N \times
  T^{k-2}\bigr)$ along the fibres of the $2$-subtorus bundle with
  fibre $T^2_{ij} = S^1_{\theta_i} \times S^1_{\theta_j}$ vanishes
  for all $i \neq j$.
\end{lemma}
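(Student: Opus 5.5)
The plan is a direct interior-product computation. We then pass to the fibre-integration formulation using the fact that $H$ is $T^k$-invariant.

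First we use the decomposition \eqref{eq:beta-decomp} to write
\[
  H \;=\; \operatorname{vol}_{\Sigma_g}\wedge\gamma \;+\; \sum_{l=1}^{k} c_l\,\operatorname{vol}_{\Sigma_g}\wedge d\theta_l .
\]
The vector fields $\partial_{\theta_i}$ are tangent to the $T^k$ factor of the product. Hence
\[
  \iota_{\partial_{\theta_i}}\operatorname{vol}_{\Sigma_g}=0,
  \qquad
  \iota_{\partial_{\theta_i}}\gamma=0,
  \qquad
  \iota_{\partial_{\theta_i}}d\theta_l=\delta_{il}.
\]
The second identity holds because $\gamma$ is pulled back from $N$; this uses the orthogonal splitting \eqref{eq:H1-decomp}, extended to $N\times T^k$ by induction on $k$.

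The graded Leibniz rule then gives
\[
  \iota_{\partial_{\theta_i}}H \;=\; c_i\,\operatorname{vol}_{\Sigma_g}
\]
(with sign $+1$, since $\operatorname{vol}_{\Sigma_g}$ has even degree). Applying $\iota_{\partial_{\theta_j}}$ to this yields zero, because $\operatorname{vol}_{\Sigma_g}$ has no leg along the torus. This proves the first assertion for every pair $i\neq j$; in fact it also holds for $i=j$, trivially. The underlying reason is bidegree: a form of pure bidegree $(2,1)$ has at most one leg along $N\times T^k$, and so at most one leg along any torus direction.

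For the equivalent formulation, we observe that $H$ has coefficients independent of $\theta_1,\dots,\theta_k$. Indeed, $\gamma$, $\operatorname{vol}_{\Sigma_g}$ and the constants $c_l$ are all $\theta$-independent; this is condition (A3) for the product background. For a $T^2_{ij}$-invariant form, fibrewise integration over the flat $2$-torus $S^1_{\theta_i}\times S^1_{\theta_j}$ is
\[
  (\pi_{ij})_*H \;=\; \pm\operatorname{Area}(T^2_{ij})\;\iota_{\partial_{\theta_j}}\iota_{\partial_{\theta_i}}H ,
\]
regarded as a form on $\Sigma_g\times N\times T^{k-2}$. So its vanishing is equivalent to the vanishing of the double contraction, and the second statement follows from the first. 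Strictly, the output of the fibre integral is a $1$-form; since the double contraction is identically zero, it is the zero $1$-form.

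No step presents a real obstacle. The only point needing care is justifying $\iota_{\partial_{\theta_i}}\gamma=0$, i.e.\ that the harmonic $1$-forms on $N\times T^k$ split as $\mathcal{H}^1(N)\oplus\bigoplus_i\mathbb{R}\,d\theta_i$ with $\gamma$ having no torus legs. This follows from the Künneth decomposition of harmonic forms on a Riemannian product, as in \eqref{eq:H1-decomp}, applied repeatedly to $N\times S^1\times\cdots\times S^1$.
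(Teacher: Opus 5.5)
Your proof is correct and is essentially the paper's argument: the vanishing of $\iota_{\partial_{\theta_i}}\iota_{\partial_{\theta_j}}H$ is the bidegree-$(2,1)$ count (no component of $H$ has two legs along $T^k$), which you unpack via $\iota_{\partial_{\theta_i}}H=c_i\operatorname{vol}_{\Sigma_g}$, and the fibre-integral form follows because $(\pi_{ij})_*$ only sees the $d\theta_i\wedge d\theta_j$ coefficient of $H$. Your use of the $T^k$-invariance of $H$ is a small sharpening that the paper leaves implicit: writing $(\pi_{ij})_*H$ as a constant multiple of the double contraction makes the stated ``equivalently'' a genuine two-way equivalence. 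Note that this invariance follows directly from the explicit form of $H$, not from \textup{(A3)}, which is a condition on $G$ and $B$.
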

 
\begin{proof}
  Since $H = \operatorname{vol}_{\Sigma_g} \wedge \beta$ with
  $\beta$ a $1$-form, every non-zero component of $H$ carries
  exactly two indices along $\Sigma_g$ and one index along
  $N \times T^k$.  In particular $H$ has no component with two
  indices along $T^k$, so
  $\iota_{\partial_{\theta_i}}\iota_{\partial_{\theta_j}} H = 0$
  for $i \neq j$.  Fibrewise integration over $T^2_{ij}$ extracts
  precisely the coefficient of
  $d\theta_i \wedge d\theta_j$ in $H$, namely
  $\iota_{\partial_{\theta_j}}\iota_{\partial_{\theta_i}} H$ up to
  sign, whence $(\pi_{ij})_*H = 0$.
\end{proof}
 
\begin{lemma}[Non-interference of successive T-dualities]%
\label{lem:non-interference}
  \begin{enumerate}
    \item[\textup{(i)}] The product background on
      $M = \Sigma_g \times N \times T^k$ is adapted along
      $\{1, \ldots, k\}$ in the sense of
      Definition \ref{def:adapted}.
    \item[\textup{(ii)}] If a background is adapted along
      $J \subseteq \{1, \ldots, k\}$ and $i \in J$, then its
      Buscher T-dual along $S^1_{\theta_i}$ is adapted along
      $J \setminus \{i\}$.
  \end{enumerate}
  In particular, for every $I \subseteq \{1, \ldots, k\}$ and
  every $j \notin I$, the $T_I$-dual background is adapted along
  $\{j\}$, so that Proposition \ref{prop:buscher} applies to
  T-duality along $S^1_{\theta_j}$ in that background.
\end{lemma}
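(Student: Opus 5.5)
Part (i) is a direct check of the three conditions of Definition \ref{def:adapted} for the product metric $g = g_\Sigma \oplus g_N \oplus g_{T^k}$ with a chosen local potential $B$. Since $T^k$ is the flat unit torus, $G_{\theta_j\theta_j} = 1$ and every mixed component $G_{Z\theta_j}$ vanishes, which gives (A1). Condition (A3) holds for $G$ because the metric coefficients depend only on the $\Sigma_g$ and $N$ coordinates. For $B$, I use the observation made before Definition \ref{def:adapted}: $H$ is invariant under every $\partial_{\theta_i}$, so on a $T^k$-invariant neighbourhood I can take
\[
B = B_\Sigma + \sum_{i} c_i\, A \wedge d\theta_i,
\]
where $\operatorname{vol}_{\Sigma_g} = dA$ locally and $B_\Sigma$ is a $\theta$-independent potential for $\operatorname{vol}_{\Sigma_g}\wedge\gamma$ on $\Sigma_g\times N$. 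This $B$ is $\theta$-independent, so (A3) holds. It also has no $d\theta_i\wedge d\theta_j$ component, so (A2) holds. This choice is consistent with Lemma \ref{lem:BEM-vanish}, since $H$ has no component with two torus legs.

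For (ii), I take a background adapted along $J$ with $i \in J$. I apply the Buscher rules \eqref{eq:buscher-metric}--\eqref{eq:buscher-B} along $\theta_i$, with $\mu,\nu$ now ranging over all directions other than $\theta_i$; these formulas are valid by (A1) for $i$. Then I verify the conditions for the index set $J\setminus\{i\}$.

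\begin{enumerate}
\item[(A3)] Every dual component is a polynomial in the original components, so $\theta$-independence is inherited.
\item[(A2)] The rule gives $\tilde B_{\theta_i\mu} = 0$, and $\tilde B_{\theta_l\theta_m} = B_{\theta_l\theta_m} = 0$ for $l,m \neq i$.
\item[(A1)] For $j \in J\setminus\{i\}$ I compute
\[
\tilde G_{\theta_j\theta_j} = G_{\theta_j\theta_j} + B_{\theta_j\theta_i}^2 = 1,
\qquad
\tilde G_{\theta_j\theta_i} = B_{\theta_j\theta_i} = 0,
\]
using (A2) in both. For $Z \neq \theta_i,\theta_j$,
\[
\tilde G_{Z\theta_j} = G_{Z\theta_j} + B_{Z\theta_i}B_{\theta_j\theta_i} = 0,
\]
again by (A1) and (A2).
\end{enumerate}

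So (A2) is exactly the hypothesis that stops the Buscher mixing from spoiling (A1) in the remaining directions. This cross-term computation is the main point, and I would state it carefully.

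The final claim follows by induction on $|I|$. Part (i) starts the induction with $J = \{1,\dots,k\}$. Each application of (ii) removes one index from $J$, so after dualising along all of $I$ in any order the background is adapted along $\{1,\dots,k\}\setminus I \ni j$. A background adapted along $J$ is in particular adapted along $\{j\}$ for any $j \in J$, since conditions (A2) and (A3) do not depend on $J$.

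To then invoke Proposition \ref{prop:buscher} with $N$ replaced by $N\times T^{k-1}$ (or its dual), I need the dual $H$-flux to keep the form $\operatorname{vol}_{\Sigma_g}\wedge(\gamma' + c_j d\theta_j)$. I expect this follows from part (ii) of that proposition together with Lemma \ref{lem:BEM-vanish}. The subtle point is that after a duality the space may be a nontrivial circle bundle. I would handle this as in Remark \ref{rem:global-dual}: work locally, where the coordinates $\theta_l$ remain angular coordinates, which is all that Definition \ref{def:adapted} requires.
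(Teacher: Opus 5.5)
Your proposal is correct and follows the paper's proof. Part (ii) is the same cross-term computation, using (A2) to kill $B_{\theta_j\theta_i}$ in $\widetilde G_{\theta_j\theta_j}$, $\widetilde G_{Z\theta_j}$ and $\widetilde G_{\theta_i\theta_j}$, and the final claim is the same repeated application of (ii) starting from (i). The only difference is how you obtain (A2) in part (i): the paper starts from an arbitrary torus-invariant potential, uses Lemma \ref{lem:BEM-vanish} and (A3) to show $B_{\theta_i\theta_j}$ is constant, and then subtracts a constant-coefficient closed $2$-form, whereas you write down a potential (essentially $A\wedge\beta$) with no two-torus-leg components from the start; both amount to the same choice of setting the constant torus $B$-moduli to zero.
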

 
\begin{proof}
  (i) Conditions \textup{(A1)} and \textup{(A3)} hold for the
  product metric, the circles carrying their flat metrics of unit
  length and the gauge for $B$ having been chosen
  torus-invariant.  For \textup{(A2)}, note that by
  Lemma \ref{lem:BEM-vanish} we have $H_{\theta_i\theta_j\mu} = 0$
  for all $i \neq j$ and all $\mu$, whence
  \[
    0 \;=\; H_{\theta_i\theta_j\mu}
    \;=\; \partial_{\theta_i} B_{\theta_j\mu}
    - \partial_{\theta_j} B_{\theta_i\mu}
    + \partial_\mu B_{\theta_i\theta_j}
    \;=\; \partial_\mu B_{\theta_i\theta_j},
  \]
  using \textup{(A3)} in the last step.  Hence
  $B_{\theta_i\theta_j}$ is constant, and adding the closed
  $2$-form $\omega = -\sum_{i<j} B_{\theta_i\theta_j}\,
  d\theta_i \wedge d\theta_j$ to $B$ sets
  $B_{\theta_i\theta_j} = 0$ without altering $H = dB$.  (This
  gauge choice fixes the constant $B$-field moduli along the
  torus; it affects neither $H$ nor any of the flux components
  computed below.)

  (ii) Let $i \in J$ and write $Z, W$ for coordinate directions
  different from $\theta_i$.  By \textup{(A1)} we have
  $G_{\theta_i\theta_i} = 1$ and $G_{Z\theta_i} = 0$, so the
  Buscher rules \cite{Buscher1987,Buscher1988} along
  $S^1_{\theta_i}$ reduce to
  \begin{equation}\label{eq:buscher-adapted}
  \begin{gathered}
    \widetilde{G}_{\theta_i\theta_i} = 1, \qquad
    \widetilde{G}_{Z\theta_i} = B_{Z\theta_i}, \qquad
    \widetilde{G}_{ZW} = G_{ZW} + B_{Z\theta_i}B_{W\theta_i},\\
    \widetilde{B}_{Z\theta_i} = 0, \qquad
    \widetilde{B}_{ZW} = B_{ZW}.
  \end{gathered}
  \end{equation}
  Let $j \in J \setminus \{i\}$.  Taking $Z = W = \theta_j$ in
  \eqref{eq:buscher-adapted} and using \textup{(A2)} gives
  $\widetilde{G}_{\theta_j\theta_j}
  = G_{\theta_j\theta_j} + B_{\theta_j\theta_i}^2 = 1$.  For
  $Z \neq \theta_j$ with $Z \neq \theta_i$, conditions
  \textup{(A1)} and \textup{(A2)} give
  $\widetilde{G}_{Z\theta_j}
  = G_{Z\theta_j} + B_{Z\theta_i}B_{\theta_j\theta_i} = 0$,
  while $\widetilde{G}_{\theta_i\theta_j} = B_{\theta_j\theta_i}
  = 0$.  Hence \textup{(A1)} holds for every index in
  $J \setminus \{i\}$.  Condition \textup{(A2)} is preserved
  because $\widetilde{B}_{\theta_l\theta_j} = B_{\theta_l\theta_j}
  = 0$ for $l, j \neq i$ and
  $\widetilde{B}_{\theta_j\theta_i} = 0$; condition \textup{(A3)}
  is preserved because every component in
  \eqref{eq:buscher-adapted} is an algebraic expression in
  torus-independent components.

  Note that \textup{(A1)} does \emph{not} persist for the index
  $i$ itself, since $\widetilde{G}_{Z\theta_i} = B_{Z\theta_i}$ is
  in general non-zero: this off-diagonal metric component is
  precisely the geometric flux produced by the duality
  (Remark \ref{rem:global-dual}).  The final assertion follows by
  applying (ii) repeatedly, starting from (i).
\end{proof}
 
We can now state the main result of this subsection.
 
\begin{theorem}[Irreducible kernel of the H-flux]%
\label{thm:irreducible-kernel}
  Let $M = \Sigma_g \times N \times T^k$ with product metric,
  $\mathcal{P}_1(N) = \{0\}$, and $H = \operatorname{vol}_{\Sigma_g}
  \wedge \beta$ of pure bidegree $(2,1)$ with $\beta \in
  \mathcal{H}^1(N \times T^k)$. Write
  $\beta = \gamma + \sum_{i=1}^{k} c_i\, d\theta_i$ with
  $\gamma \in \mathcal{H}^1(N)$ and $c_i \in \mathbb{R}$.
  Then:
  \begin{enumerate}
    \item[\textup{(i)}] For every subset
      $I \subseteq \{1, \ldots, k\}$, the composed T-duality
      $T_I := \prod_{i \in I} T_{\theta_i}$ along the circle factors
      $\{S^1_{\theta_i}\}_{i \in I}$ is well-defined and independent
      of the order of composition.
    \item[\textup{(ii)}] The H-flux in the $T_I$-dual background is
      \[
        \widetilde{H}_I
        = \operatorname{vol}_{\Sigma_g} \wedge
        \Bigl(\gamma + \sum_{i \notin I} c_i\, d\theta_i\Bigr).
      \]
    \item[\textup{(iii)}] The H-flux after T-duality along the full
      torus $T^k$ (i.e.\ $I = \{1, \ldots, k\}$) is
      \[
        \widetilde{H}_{\{1,\ldots,k\}}
        = \operatorname{vol}_{\Sigma_g} \wedge \gamma.
      \]
    \item[\textup{(iv)}] $r^\sharp = 1$ if and only if $\gamma \neq 0$,
      if and only if the H-flux cannot be completely eliminated by any
      composed T-duality $T_I$ along circle factors of $T^k$.
  \end{enumerate}
\end{theorem}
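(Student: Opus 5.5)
The plan is to argue by induction on $|I|$, using Lemma~\ref{lem:non-interference} to keep the background adapted and Proposition~\ref{prop:buscher} to compute each single step. Then (iii) and (iv) are read off from the explicit formula in (ii).

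\emph{Base and normalisation.} By Lemma~\ref{lem:non-interference}(i), the product background is adapted along $\{1,\ldots,k\}$ once the torus-invariant gauge with $B_{\theta_i\theta_j}=0$ is fixed; Lemma~\ref{lem:BEM-vanish} makes this gauge available. For $I=\emptyset$ the claim in (ii) is just $\widetilde{H}_\emptyset=H$.

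\emph{Inductive step for (ii).} Suppose the $T_I$-dual is adapted along $\{1,\ldots,k\}\setminus I$, with
\[
\widetilde{H}_I=\operatorname{vol}_{\Sigma_g}\wedge\Bigl(\gamma+\sum_{i\notin I}c_i\,d\theta_i\Bigr).
\]
Pick $j\notin I$. Proposition~\ref{prop:buscher} applies along $S^1_{\theta_j}$: its proof used only (A1), (A3) and the closedness and $\partial_{\theta_j}$-invariance of the part of the flux without a $\theta_j$ leg. That part is $\gamma+\sum_{i\notin I\cup\{j\}}c_i\,d\theta_i$, which is closed and torus-invariant, with constant coefficients on the $d\theta_i$. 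Part~(i) of Proposition~\ref{prop:buscher} removes $c_j\operatorname{vol}_{\Sigma_g}\wedge d\theta_j$ and turns it into $\widetilde{G}_{Z\theta_j}$. Part~(ii) leaves every component without a $\theta_j$ index unchanged, since $\widetilde{B}_{ZW}=B_{ZW}$ by \eqref{eq:buscher-adapted}. This gives the formula for $I\cup\{j\}$, and Lemma~\ref{lem:non-interference}(ii) gives adaptedness along the remaining indices.

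\emph{Order-independence for (i).} Well-definedness follows from the fact that every intermediate background is adapted along every index still to be dualised. Here the main obstacle appears: I must show that $T_{\theta_i}T_{\theta_j}=T_{\theta_j}T_{\theta_i}$ at the level of $(G,B)$, not only for $H$. I will compose the rules \eqref{eq:buscher-adapted} explicitly. Dualising along $\theta_i$ then $\theta_j$ gives
\begin{gather*}
\widetilde{G}_{Z\theta_i}=B_{Z\theta_i},\qquad \widetilde{G}_{Z\theta_j}=B_{Z\theta_j},\qquad \widetilde{G}_{ZW}=G_{ZW}+B_{Z\theta_i}B_{W\theta_i}+B_{Z\theta_j}B_{W\theta_j},\\
\widetilde{B}_{Z\theta_i}=\widetilde{B}_{Z\theta_j}=0,\qquad \widetilde{B}_{ZW}=B_{ZW}.
\end{gather*}
The cross terms that could break the symmetry in $i,j$ are proportional to $B_{\theta_i\theta_j}$, and these vanish by (A2). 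The $B$-field after the first step, $\widetilde{B}_{Z\theta_j}=B_{Z\theta_j}$ for $Z\neq\theta_i$, feeds into the second step unchanged. The resulting expressions are symmetric in $i$ and $j$, so any two transpositions commute, and hence every ordering of $I$ gives the same background. Globally, this corresponds to the dual being the $T^{|I|}$-bundle with Chern classes $c_i\operatorname{vol}_{\Sigma_g}$ for $i\in I$ (Remark~\ref{rem:global-dual}), which is symmetric in $I$.

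\emph{Parts (iii) and (iv).} Taking $I=\{1,\ldots,k\}$ in (ii) gives (iii). For (iv), the decomposition \eqref{eq:P1-decomp}, extended to $T^k$, together with $\mathcal{P}_1(N)=0$ gives $\mathcal{P}_1(N\times T^k)=\operatorname{span}\{d\theta_i\}$. Hence $r^\sharp=0$ (with $\beta\neq0$) if and only if $\gamma=0$. By (ii), $\widetilde{H}_I=0$ if and only if $\gamma=0$ and $c_i=0$ for all $i\notin I$. So if $\gamma=0$, the choice $I=\{1,\ldots,k\}$ eliminates the flux. If $\gamma\neq0$, the term $\operatorname{vol}_{\Sigma_g}\wedge\gamma$ is nonzero and survives for every $I$. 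Therefore $r^\sharp=1$ if and only if $\gamma\neq0$, if and only if no $T_I$ eliminates the flux.
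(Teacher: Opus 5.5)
Your proposal is correct and follows the paper's proof. Part (ii) goes by induction on $|I|$ via Proposition~\ref{prop:buscher} and Lemma~\ref{lem:non-interference}, (iii) is the case $I=\{1,\ldots,k\}$, and (iv) follows from $\mathcal{P}_1(N\times T^k)=\operatorname{span}\{d\theta_i\}$. Your explicit composition of the Buscher rules in (i) sharpens the paper's terse order-independence argument, since the $\theta_i$-duality does shift $G_{ZW}$ by $B_{Z\theta_i}B_{W\theta_i}$ and it is \textup{(A2)} that makes this harmless. One wording fix: ``any two transpositions commute'' should read ``any two consecutive dualities commute on any background adapted along both indices,'' which suffices because adjacent transpositions generate all orderings.
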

 
\begin{proof}
  (i) By Lemma \ref{lem:BEM-vanish}, the
  Bouwknegt--Hannabuss--Mathai obstruction vanishes for all
  $i \neq j$, so successive T-dualities along the circle factors
  of $T^k$ are well-defined and produce classical backgrounds
  (see \cite{BHM2004}). By Lemma \ref{lem:non-interference},
  each T-duality preserves the adaptedness of the background along
  all remaining torus directions. Since the Buscher rules for
  $S^1_{\theta_i}$ act only on the components of $B$ with a
  $\theta_i$-index, and by Lemma \ref{lem:non-interference} the
  components with a $\theta_j$-index ($j \neq i$) are unaffected,
  the result is independent of the order.
 
  (ii) We proceed by induction on $|I|$. The base case $|I| = 1$ is
  Proposition \ref{prop:buscher}, applicable by
  Lemma \ref{lem:non-interference}(i). For the inductive step,
  suppose the result holds for $I$ with $|I| = m < k$, and let
  $j \notin I$. By the inductive hypothesis, the H-flux after
  $T_I$ is
  $\widetilde{H}_I = \operatorname{vol}_{\Sigma_g} \wedge
  (\gamma + \sum_{i \notin I} c_i\, d\theta_i)$.
  By Lemma \ref{lem:non-interference}, the $T_I$-dual background
  is adapted along $\{j\}$, and the $1$-form
  $\gamma + \sum_{i \notin I \cup \{j\}} c_i\, d\theta_i$ is
  closed, torus-invariant and has no component along
  $d\theta_j$; hence Proposition \ref{prop:buscher} applies:
  T-duality along $S^1_{\theta_j}$ converts the component
  $c_j\, d\theta_j$ into geometric flux and leaves the remaining
  components unchanged. Thus
  $\widetilde{H}_{I \cup \{j\}}
  = \operatorname{vol}_{\Sigma_g} \wedge
  (\gamma + \sum_{i \notin I \cup \{j\}} c_i\, d\theta_i)$.
 
  (iii) This is the case $I = \{1, \ldots, k\}$ of (ii).
 
  (iv) Since $\mathcal{P}_1(N) = \{0\}$, the parallel-form stratum
  satisfies $\mathcal{P}_1(N \times T^k)
  = \mathcal{P}_1(N) \oplus \bigoplus_{i=1}^k \mathbb{R}\cdot d\theta_i
  = \bigoplus_{i=1}^k \mathbb{R}\cdot d\theta_i$.
  The condition $r^\sharp = 0$ with $r_{2,1} = 1$ means
  $\beta \in \mathcal{P}_1(N \times T^k)$, i.e.\
  $\gamma = 0$ and $\beta = \sum_i c_i\, d\theta_i$.
  By (iii), $\widetilde{H}_{\{1,\ldots,k\}} = 0$: the H-flux is
  completely eliminated.
  Conversely, $r^\sharp = 1$ means $\beta \notin \mathcal{P}_1(N \times T^k)$,
  which forces $\gamma \neq 0$. By (iii),
  $\widetilde{H}_{\{1,\ldots,k\}}
  = \operatorname{vol}_{\Sigma_g} \wedge \gamma \neq 0$.
  Since this is the H-flux after T-duality along \emph{all} circle
  factors, and (ii) shows that partial T-dualities leave at least
  the $\gamma$-component, no composed T-duality $T_I$ can eliminate
  the H-flux entirely.
\end{proof}
 
\begin{remark}[Connection to the flux chain]\label{rem:flux-chain}
  In the framework of Shelton--Taylor--Wecht \cite{SheltonTaylorWecht}, successive
  T-dualities convert H-flux into geometric flux ($f$-flux),
  then into non-geometric $Q$-flux and $R$-flux. The component
  $\sum_{i \in I} c_i\, d\theta_i$ of $\beta$ enters this chain
  through the T-dualities $T_I$: each $c_i\, d\theta_i$ is converted
  first into geometric flux (Proposition \ref{prop:buscher}) and may
  be further converted by subsequent dualities.
  The component $\gamma \in \mathcal{H}^1(N)$, by contrast, cannot
  enter the chain at all: it has no component along any torus direction,
  so no T-duality along a flat circle factor can act on it.
  This component is the \emph{irreducible kernel} of the H-flux---the
  part that remains H-flux in every duality frame reachable by
  T-duality along flat circle factors.
  When $r^\sharp = 0$, the irreducible kernel vanishes and the entire
  H-flux is accessible to the chain.
  When $r^\sharp = 1$, the irreducible kernel is non-trivial: the
  H-flux can never be fully converted into geometric or non-geometric
  flux by T-dualities along flat circle factors.
\end{remark}

\section{Examples}
\label{sec:examples}

\begin{example}\label{ex:double-Tduality}
  Let $M = \Sigma_g \times \Sigma_{g'} \times T^2$ with
  $g, g' \geq 2$, product metric, and $T^2 = S^1_\phi \times S^1_\theta$.
  A non-zero parallel $1$-form has constant norm, hence defines a
  nowhere-vanishing vector field; since
  $\chi(\Sigma_{g'}) = 2 - 2g' \neq 0$, the Poincar\'e--Hopf theorem
  excludes such a field, so $\mathcal{P}_1(\Sigma_{g'}) = \{0\}$ for
  \emph{any} metric on $\Sigma_{g'}$.
  Let $\beta = \gamma + c_\phi\, d\phi + c_\theta\, d\theta$ with
  $\gamma \in \mathcal{H}^1(\Sigma_{g'})$.
  \begin{itemize}
    \item[(i)] If $\gamma \neq 0$: $r^\sharp = 1$. T-duality along
      $S^1_\phi$ converts $c_\phi\, d\phi$ into geometric flux;
      T-duality along $S^1_\theta$ converts $c_\theta\, d\theta$.
      After both T-dualities, $\widetilde{H} =
      \operatorname{vol}_{\Sigma_g} \wedge \gamma \neq 0$.
      The irreducible kernel persists.
    \item[(ii)] If $\gamma = 0$: $r^\sharp = 0$. After T-duality along
      both circles, $\widetilde{H} = 0$.
      The H-flux is completely converted.
  \end{itemize}
\end{example}

\begin{example}[Hyperbolic three-manifold]
\label{ex:hyperbolic}
Let $N^3$ be a compact hyperbolic three-manifold with
$b_1(N^3) \geq 1$ (for instance, a mapping torus of a
pseudo-Anosov diffeomorphism of a surface, which is
hyperbolic by Thurston's theorem).
Since $\operatorname{Ric}(N^3) = -2\,g_{N^3}$, a parallel
vector field $X$ on $N^3$ would satisfy
$\operatorname{Ric}(X, X) = -2|X|^2 = 0$, forcing $X = 0$.
Thus $\mathcal{P}_1(N^3) = \{0\}$.

On $M = \Sigma_g \times N^3 \times S^1$ with any non-zero
$\beta \in \mathcal{H}^1(N^3)$:
$r^\sharp = 1$, the $H$-flux survives T-duality, and the product splitting is not invariant under the holonomy on both $M$ and on the reduced product $\Sigma_g \times N^3$.
\end{example}

\begin{example}[Flat torus factor]
\label{ex:flattorus}
Let $N = T^2$ with the flat metric, so that
$\mathcal{P}_1(T^2) = \mathcal{H}^1(T^2) = \mathbb{R}^2$.
Then $\mathcal{P}_1(T^2 \times S^1) = \mathbb{R}^3 =
\mathcal{H}^1(T^2 \times S^1)$: every harmonic $1$-form
is parallel.
For any non-zero $\beta \in \mathcal{H}^1(T^2 \times S^1)$:
$r^\sharp = 0$.  Here $\beta$ has constant coefficients and
$\beta^\sharp$ is a constant vector field on the flat $3$-torus,
so its orbits are closed precisely when its direction is
rational.  This is automatic under the integrality hypothesis
\eqref{eq:integrality}: evaluating $[H]$ on the $3$-cycles
$\Sigma_g \times S^1_i$, for $S^1_i$ the three coordinate
circles, shows that the coefficients of $\beta$ are pairwise
commensurable.  Consequently there exists a specific circle
factor $S^1_\beta$ (generated by $\beta^\sharp$) such that
T-duality along $S^1_\beta$ converts the $H$-flux entirely into
geometric flux.
On $\Sigma_g \times T^2 \times S^1$ the product splitting is not invariant under the holonomy (by \cite[Corollary 5.3]{PT1}, since $r = 1$), but this non-invariance is not resistant to T-duality along $S^1_\beta$.
\end{example}

\begin{example}[Mixed parallel stratum]
\label{ex:mixed}
Let $N = \Sigma_{g'} \times S^1_\varphi$ with $g' \geq 2$
and the product metric.  Then
$\mathcal{P}_1(N) = \{0\} \oplus \mathbb{R}\cdot d\varphi
= \mathbb{R}\cdot d\varphi$ and
$\mathcal{P}_1(N \times S^1) = \mathbb{R}\cdot d\varphi
\oplus \mathbb{R}\cdot d\theta$.
On $M = \Sigma_g \times \Sigma_{g'} \times S^1_\varphi
\times S^1$:

If $\beta \in \mathcal{H}^1(\Sigma_{g'})$:
$\beta \notin \mathcal{P}_1(N \times S^1)$,
$r^\sharp = 1$.  The $H$-flux survives T-duality along
both $S^1_\varphi$ and $S^1$.

If $\beta = d\varphi$:
$r^\sharp = 0$.  T-duality along $S^1_\varphi$ converts
the $H$-flux into geometric flux.  But T-duality along $S^1$
(the other circle) does not, because $d\varphi$ has no
component along $d\theta$.

If $\beta = d\theta$:
$r^\sharp = 0$.  T-duality along $S^1$ converts the $H$-flux.
But T-duality along $S^1_\varphi$ does not.

This example illustrates that $r^\sharp$ discriminates among
cohomology classes on the same manifold: the three choices of
$\beta$ produce qualitatively different T-duality behaviour,
yet all have $r = 1$.
\end{example}

\section{Discussion}
\label{sec:discussion}

The topological T-duality programme of Bouwknegt, Evslin, and Mathai \cite{BEM2004}, and its subsequent developments by Bunke–Schick \cite{BunkeSchick2005}, and Waldorf \cite{Waldorf2024}, determines the topology and H-flux of the T-dual from the class $[H] \in H^3(M;\mathbb{Z})$ and the first Chern class of the circle bundle, without reference to the Riemannian metric. As shown in Remark \ref{rem:rel_T-duality}, the invariant $r^\sharp$ does not determine the fibrewise integral $\int_{S^1} H$ that enters the BEM formula: both values of that integral are compatible with $r^\sharp = 1$. What $r^\sharp$ determines instead is whether an $H$-flux component survives in the dual background, irrespective of the topological change recorded by the BEM formula. This constitutes a metric refinement: $r^\sharp$ uses the Riemannian structure (via $\mathcal{P}_1$) to extract information that is invisible to the purely topological framework of \cite{BEM2004}, namely whether the cohomological coupling is aligned with the flat sub-factors of the de Rham splitting.

When the $H$-flux is transverse to the parallel-form strata
($r^\sharp = 1$), it is resistant to T-duality along any flat circle factor and the non-invariance of the product splitting under the holonomy of the associated connection is preserved under dimensional reduction. When the $H$-flux is absorbed by the parallel-form strata ($r^\sharp = 0$), there exists a specific circle factor along which T-duality converts the entire $H$-flux into geometric flux, and the non-invariance of the product splitting does not survive dimensional reduction along that factor.

This distinction is invisible to topological T-duality, which
operates at the level of cohomology classes and does not see
the parallel-form strata.  It is equally invisible to the
topological invariant $r = \operatorname{rank}_\mathbb{R}
([\omega]_{\mathrm{mixed}})$, which equals $1$ in all cases
considered.  The metric information encoded in
$\dim\mathcal{K}$, and hence in $r^\sharp$, is essential for
predicting the T-duality behaviour.

When $M_2$ admits multiple circle factors,
Theorem \ref{thm:irreducible-kernel} sharpens the
single-duality analysis:  for $H$-flux of pure bidegree
$(2,1)$ on $M = \Sigma_g \times N \times T^k$, the
Bouwknegt--Hannabuss--Mathai obstruction to successive
T-dualities vanishes automatically
(Lemma \ref{lem:BEM-vanish}), the dualities are
non-interfering (Lemma \ref{lem:non-interference}),
and their combined effect decomposes the $H$-flux into
a convertible part $\sum c_i\,d\theta_i$ (accessible to
the Shelton--Taylor--Wecht flux chain \cite{SheltonTaylorWecht})
and an irreducible kernel
$\operatorname{vol}_{\Sigma_g} \wedge \gamma$ that persists
as $H$-flux in every duality frame.  The invariant $r^\sharp$
detects whether this kernel is non-trivial: $r^\sharp = 1$
if and only if $\gamma \neq 0$.  The test reduces to
verifying membership of $\beta$ in
$\mathcal{P}_1(N \times T^k)$, a linear-algebraic computation
on finite-dimensional spaces.  The vanishing of the
Bouwknegt--Hannabuss--Mathai obstruction is specific to the
bidegree $(2,1)$ condition and does not hold for generic
$H$-flux, giving the bidegree hypothesis a structural role in the
theory of multiple T-dualities.

More broadly, the results of this note reveal a structural
connection between two a priori independent phenomena: the non-invariance of the product splitting under the holonomy of metric connections with torsion (governed by $r^\sharp$ via \cite{PT1}) and the survival of $H$-flux under T-duality (governed by the Buscher rules). The invariant $r^\sharp$ provides a unified explanation: in both cases, the relevant
dichotomy is whether the cohomological coupling between
the factors is transverse to the de Rham flat sub-factors ($r^\sharp = 1$) or aligned with them ($r^\sharp = 0$).

We note that this dichotomy has an exact structural parallel
in the Kaluza--Klein analysis of spin--orbit-coupled
Bose--Einstein condensates carried out in \cite{PT2}.
In that setting, $r^\sharp = 0$ at the product metric
means that the harmonic $1$-form
$\beta = c^{(+)}d\varphi_+ + c^{(-)}d\varphi_-$ belongs
to $\mathcal{P}_1$, identifying a circle direction
$S^1_\beta$ in the phase space along which a
phase-locking protocol eliminates the topological
obstruction; $r^\sharp = 1$ at the physical
Kaluza--Klein metric certifies that no such protocol
exists.  The mechanism is the same in both contexts:
$r^\sharp = 0$ identifies a flat circle factor, read off
from the parallel vector field $\beta^\sharp$, along which
the cohomological coupling can be neutralised (by T-duality
or dimensional reduction in the present setting, by
phase-locking in \cite{PT2}), while $r^\sharp = 1$
certifies that no such factor exists.

A natural direction for future investigation is to extend
this analysis to non-product metrics (Kaluza--Klein
backgrounds), where the Buscher rules involve additional
terms from the off-diagonal metric components.  In the
context of flux compactifications, the distinction between
$H$-flux and geometric flux---which $r^\sharp$ detects on
product geometries---determines the structure of the
effective superpotential and the pattern of moduli
stabilisation \cite{SheltonTaylorWecht}.  The irreducible
kernel identified by Theorem \ref{thm:irreducible-kernel}
represents the component of the $H$-flux whose contribution
to the effective potential is invariant across all duality
frames accessible by T-duality along flat circle factors;
extending $r^\sharp$ to fibered geometries could provide
new tools for classifying string vacua and for identifying
duality-invariant sectors of the flux landscape.

\end{document}